\definecolor{verylight}{gray}{0.97}
\definecolor{light}{gray}{0.9}
\definecolor{medium}{gray}{0.85}
\definecolor{dark}{gray}{0.6}
\def\frk{\frak}               
\def\Phi{{\frk n}}
\def\Phi{{\frk N}}
\def\opn#1#2{\def#1{\operatorname{#2}}} 
\opn\chara{char} \opn\length{\ell} \opn\pd{pd} \opn\rk{rk}
\opn\projdim{proj\,dim} \opn\injdim{inj\,dim} \opn\rank{rank}
\opn\depth{depth} \opn\grade{grade} \opn\height{height}
\opn\embdim{emb\,dim} \opn\codim{codim}
\opn\Tr{Tr} \opn\bigrank{big\,rank}
\opn\superheight{superheight}\opn\lcm{lcm}
\opn\trdeg{tr\,deg}
\opn\reg{reg} \opn\lreg{lreg} \opn\ini{in} \opn\lpd{lpd}
\opn\size{size}\opn\bigsize{bigsize}
\opn\cosize{cosize}\opn\bigcosize{bigcosize}
\opn\sdepth{sdepth}\opn\sreg{sreg}
\opn\link{link}\opn\fdepth{fdepth}
\opn\div{div} \opn\Div{Div} \opn\cl{cl} \opn\Cl{Cl}
\opn\Spec{Spec} \opn\Supp{Supp} \opn\supp{supp} \opn\Sing{Sing}
\opn\Ass{Ass} \opn\Min{Min}\opn\Mon{Mon} \opn\dstab{dstab} \opn\astab{astab}
\opn\Syz{Syz}
\opn\Ann{Ann} \opn\Rad{Rad} \opn\Soc{Soc}
\opn\Im{Im} \opn\Ker{Ker} \opn\Coker{Coker} \opn\Am{Am}
\opn\Hom{Hom} \opn\Tor{Tor} \opn\Ext{Ext} \opn\End{End}
\opn\Aut{Aut} \opn\id{id}
\opn\nat{nat}
\opn\pff{pf}
\opn\Pf{Pf} \opn\GL{GL} \opn\SL{SL} \opn\mod{mod} \opn\ord{ord}
\opn\Gin{Gin} \opn\Hilb{Hilb}\opn\sort{sort}
\opn\initial{init}
\opn\ende{end}
\opn\height{height}
\opn\type{type}
\opn\aff{aff} \opn\con{conv} \opn\relint{relint} \opn\st{st}
\opn\lk{lk} \opn\cn{cn} \opn\core{core} \opn\vol{vol}
\opn\link{link} \opn\star{star}\opn\lex{lex}
\opn\gr{gr}
\def\pot#1#2{#1[\kern-0.28ex[#2]\kern-0.28ex]}
\opn\dirlim{\underrightarrow{\lim}}
\opn\inivlim{\underleftarrow{\lim}}
\def\Implies{\ifmmode\Longrightarrow \else
        \unskip${}\Longrightarrow{}$\ignorespaces\fi}
\def\implies{\ifmmode\Rightarrow \else
        \unskip${}\Rightarrow{}$\ignorespaces\fi}
\def\iff{\ifmmode\Longleftrightarrow \else
        \unskip${}\Longleftrightarrow{}$\ignorespaces\fi}
\newtheorem{Theorem}{Theorem}[section]
 \newtheorem{Lemma}[Theorem]{Lemma}
 \newtheorem{Corollary}[Theorem]{Corollary}
 \newtheorem{Example}[Theorem]{Example}
 \newtheorem{Definition}[Theorem]{Definition}
\let\epsilon\varepsilon
\let\kappa=\varkappa
\def\qed{\ifhmode\textqed\fi
      \ifmmode\ifinner\quad\qedsymbol\else\dispqed\fi\fi}
\def\textqed{\unskip\nobreak\penalty50
       \hskip2em\hbox{}\nobreak\hfil\qedsymbol
       \parfillskip=0pt \finalhyphendemerits=0}
\def\dispqed{\rlap{\qquad\qedsymbol}}
\opn\dis{dis}
\def\pnt{{\raise0.5mm\hbox{\large\bf.}}}
\opn\Lex{Lex}
\begin{document}

\author[Mafi and Naderi]{Amir Mafi* and Dler Naderi}
\title{Integral closure and Hilbert series of a special monomial ideal}

\address{Amir Mafi, Department of Mathematics, University Of Kurdistan, P.O. Box: 416, Sanandaj, Iran.}
\email{a\_mafi@ipm.ir}

\address{Dler Naderi, Department of Mathematics, University of Kurdistan, P.O. Box: 416, Sanandaj,
Iran.}
\email{dler.naderi65@gmail.com}

\subjclass[2010]{13A02, 13D40, 13F55, 13B22}
\keywords{Hilbert Series, Monomial ideal, Integral closure, Freiman ideal.\\
*Corresponding author}

\begin{abstract}
Let $R=K[x_1,\ldots, x_n]$ be the polynomial ring in $n$ variables over a field $K$ and let $M_{n,t}=(x^{e_1},\ldots, x^{e_n})$ be a monomial ideal of $R$, where $x^{e_i}=x_1^t\ldots x_{i-1}^tx_{i+1}^t\ldots x_n^t$. We study the unmixedness of its integral closure. Furthermore, we compute the Hilbert series of this ideal and we show that this ideal is Freiman.

\end{abstract}

\maketitle

\section*{Introduction}
Throughout this paper, we assume that $R=K[x_1,\ldots, x_n]$ is the polynomial ring in $n$ variables over a field $K$ with the maximal ideal $\frak{m}=(x_1,\ldots, x_n)$. Let $I$ be a monomial ideal of $R$ and suppose $\Ass(I)$ is the set of associated prime ideals of $R/I$. Brodmann \cite{B1} showed that there exists an integer $t_0$ such that $\Ass(I^t)=\Ass(I^{t_0})$ for all $t\geq t_0$. The smallest such integer $t_0$ is called the {\it index of Ass-stability} of $I$, and denoted by $\astab(I)$. Brodmann \cite{B} also showed that there exists an integer $t_0$ such that $\depth R/I^t=\depth R/I^{t_0}$ for all $t\geq t_0$. The smallest such integer $t_0$ is called the {\it index of depth stability} of $I$ and denoted by $\dstab(I)$.

Let $d, a_1,..., a_n$ be positive integers. We put $I_{(d;a_1,...,a_n)}\subset R$ be the monomial ideal generated by the monomials $u\in R$ of degree $d$ satisfying $\deg_{x_i}(u)\leq a_i$ for all $i=1,...,n$. Monomial ideals of this type are called ideals of Veronese type.
Monomial ideals of Veronese type are polymatroidal. In addition, if $a_i=1$ for all $i$, then we use $I_{d;n}$ instead of $I_{(d;1,\ldots,1)}$.
Note that the product of polymatroidal ideals is again polymatroidal (see \cite[Theorem 5.3]{CH}). In particular each power of a polymatroidal ideal is polymatroidal. In addition, every polymatroidal ideal is a normal ideal (see \cite[Theorem 3.4]{HRV}).

Katzman \cite{K} gave an explicit formula for the Hilbert series of the Veronese type ideals ( see also \cite{DH}).
Jarrah \cite{J} presented $M_{n,t}=(x^{e_1},\ldots, x^{e_n})$ a monomial ideal of $R$, where $x^{e_i}=x_1^t\ldots x_{i-1}^tx_{i+1}^t\ldots x_n^t$ and he proved that these ideals are unmixed monomial ideals of $R$ such that their integral closures have embedded primes.
 In \cite{HHZ} the equigenerated monomial ideal $I$ of $R$ is called a Freiman ideal, if $\mu(I^2)=\ell(I) \mu(I)-{\ell(I) \choose 2}$, where $\ell(I)$ is the analytic spread of $I$, that is, the dimension of $\mathcal{R}(I)/{{\frak{m}}\mathcal{R}(I)}$.

The purpose of this paper is to give a simply proof for the main theorem of \cite{J} and also we compute the Hilbert series of $M_{n,t}$.
Furthermore, we show that $M_{n,t}$ is a Freiman ideal.
For any unexplained notion or terminology, we refer the reader to \cite{HH1} or \cite{V}.

\section{Integral closure of a special monomial ideal}

We start this section by the following definition.
\begin{Definition}
Let $I$ be an ideal of $R$. Then an element $\alpha\in R$ is integral over $I$, if there is an equation
\[ {\alpha}^k+a_1{\alpha}^{k-1}+\ldots+a_{k-1}\alpha+a_k=0,\]
with $a_i\in I^i$. The set of elements $\overline{I}$ in $R$ which are integral over $I$ is the integral closure of $I$. The ideal $I$ is integrally closed, if $I=\overline{I}$, and $I$ is normal if all powers of $I$ are integrally closed. If $u\in\frak{m}^{k-1}$ or if $u\in\frak{m}\setminus\frak{m}^2$, then the ideal $(u)+\frak{m}^k$ is integrally closed (see \cite[Exercises 1.18, 1.19]{HS}).
Moreover, if $I$ is monomial ideal, then $\overline{I}$ is a monomial ideal generated by all monomials $u\in R$ for which there exists an integer $k$ such that $u^k\in I^k$.
\end{Definition}
\begin{Lemma}\label{L1}
Let $I=(u_1,\ldots, u_k)$ be a monomial ideal of $R$ and $J=(u_1^t,\ldots,u_k^t)$, where $t$ is a natural number. Then $\overline{J}=\overline{I^t}$.

\end{Lemma}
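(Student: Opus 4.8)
The plan is to prove the two inclusions $\overline{J}\subseteq\overline{I^t}$ and $\overline{I^t}\subseteq\overline{J}$ separately, using only the characterization of integral closure of monomial ideals recalled in the preceding definition (for a monomial ideal $L$, a monomial $w$ lies in $\overline{L}$ exactly when $w^{p}\in L^{p}$ for some positive integer $p$), together with the elementary facts that integral closure is monotone and idempotent, i.e.\ $\overline{\overline{L}}=\overline{L}$.

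The inclusion $\overline{J}\subseteq\overline{I^t}$ is immediate: each generator $u_i^t$ of $J$ is the product of $t$ copies of $u_i$, hence lies in $I^t$, so $J\subseteq I^t$, and monotonicity of integral closure gives $\overline{J}\subseteq\overline{I^t}$. For the reverse inclusion the key reduction is that it suffices to show $I^t\subseteq\overline{J}$: applying integral closure and idempotency then yields $\overline{I^t}\subseteq\overline{\overline{J}}=\overline{J}$. Since $\overline{J}$ is an ideal, I only need to check that every monomial generator of $I^t$ belongs to $\overline{J}$. Such a generator has the form $w=u_{i_1}\cdots u_{i_t}$; collecting equal factors, I write $w=\prod_{v=1}^{k}u_v^{m_v}$, where $m_v$ is the number of indices $s$ with $i_s=v$, so that $\sum_{v=1}^{k}m_v=t$.

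The main point is then a single clean computation. Raising $w$ to the power $t$ gives
\[
w^t=\prod_{v=1}^{k}u_v^{m_v t}=\prod_{v=1}^{k}\bigl(u_v^t\bigr)^{m_v}.
\]
The right-hand side is a product of exactly $\sum_{v}m_v=t$ of the generators $u_1^t,\dots,u_k^t$ of $J$, hence it is one of the monomial generators of $J^t$; in particular $w^t\in J^t$. By the characterization of integral closure (with exponent $p=t$), this shows $w\in\overline{J}$, and since $w$ was an arbitrary generator of $I^t$ the reduction above completes the proof.

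I do not expect a serious obstacle here; the only subtlety is choosing the exponent $t$ (rather than some larger integer) so that each $m_v t$ is divisible by $t$ and the factors regroup exactly into a generator of $J^t$. As an alternative one could argue geometrically: both $\overline{J}$ and $\overline{I^t}$ are determined by their Newton polyhedra, and writing $a_i$ for the exponent vector of $u_i$ one has $\con\{t a_1,\dots,t a_k\}=t\cdot\con\{a_1,\dots,a_k\}$, which coincides with the $t$-fold Minkowski sum $\con\{a_{i_1}+\cdots+a_{i_t}\}$ describing $I^t$; the two Newton polyhedra then agree, giving $\overline{J}=\overline{I^t}$ at once.
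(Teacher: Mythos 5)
Your proof is correct and follows essentially the same route as the paper: both establish $J\subseteq I^t$ for one inclusion and, for the other, show $I^t\subseteq\overline{J}$ via the identity $\bigl(u_1^{e_1}\cdots u_k^{e_k}\bigr)^t=(u_1^t)^{e_1}\cdots(u_k^t)^{e_k}\in J^t$ with $\sum e_i=t$, then take integral closures. Your version is slightly more careful in reducing to monomial generators of $I^t$ and in invoking idempotency of integral closure explicitly, but the key computation is identical.
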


\begin{proof}
Suppose that $a\in I^t$. Then $a=u_1^{e_1}\ldots u_k^{e_k}$, where $\Sigma_{i=1}^k e_i=t$. It therefore follows $a^t=(u_1^t)^{e_1}\ldots(u_k^t)^{e_k}\in J^t$. Consider $x^t-a^t$ as an equation of integral dependence and so $a\in\overline{J}$. Thus $I^t\subseteq\overline{J}$ and hence $\overline{I^t}\subseteq\overline{J}$. For the converse it is clear $J\subseteq I^t$ and so $\overline{J}\subseteq\overline{I^t}$. Therefore $\overline{J}=\overline{I^t}$, as required.
\end{proof}

\begin{Example}Let $n=2$ and $I=(x_1^k,x_2^{k+1})$ be an ideal of $R$. Then $\overline{I}=((x_1,x_2)^{k+1},x_1^k)$.
\end{Example}

\begin{proof}
It is clear $(x_1^{k+1},x_2^{k+1})\subseteq I$ and so, by Lemma, \ref{L1} $(x_1,x_2)^{k+1}=\overline{(x_1,x_2)^{k+1}}\subseteq\overline{I}$. It therefore follows $((x_1,x_2)^{k+1},x_1^k)\subseteq\overline{I}$. Also, $I\subseteq ((x_1,x_2)^{k+1},x_1^k)$ and hance $\overline{I}\subseteq\overline{((x_1,x_2)^{k+1},x_1^k)}$.
Since $x_1^k\in (x_1,x_2)^k$, it follows that $((x_1,x_2)^{k+1},x_1^k)$ is integrally closed and so $\overline{I}\subseteq ((x_1,x_2)^{k+1},x_1^k)$. Therefore $\overline{I}=((x_1,x_2)^{k+1},x_1^k)$.
\end{proof}

\begin{Example}
Let $n=2$ and $I=(x_1,x_2^k)$. Then $I$ is normal.
\end{Example}

\begin{proof}
Since $x_1\in(x_1,x_2)\setminus(x_1,x_2)^2$, it follows $\overline{I}=\overline{((x_1,x_2)^k,x_1)}=((x_1,x_2)^k,x_1)=I$. Now, by using \cite[Theorem 1.4.10]{HS} $I$ is normal, as required.
\end{proof}

\begin{Lemma}\label{L2}
Let $I$ be a monomial ideal of $R$ such that $\Ass(I)=\{\frak{p}\}$. Then $\Ass(\overline{I})=\{\frak{p}\}$.
\end{Lemma}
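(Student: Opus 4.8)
The plan is to reduce the statement to the case of an ideal primary to the maximal ideal of a smaller polynomial ring, where the conclusion is immediate, and then transport the information back. Since $I$ is a monomial ideal, its unique associated prime $\frak{p}$ is a monomial prime, so after relabelling the variables we may assume $\frak{p}=(x_1,\dots,x_s)$ for some $s\le n$. Because $\Ass(I)=\{\frak{p}\}$ is a singleton, $\frak{p}$ is the unique minimal prime of $I$ and therefore $\sqrt{I}=\frak{p}$. For a monomial ideal this radical condition forces every minimal monomial generator of $I$ to involve only the variables $x_1,\dots,x_s$; equivalently, setting $S=K[x_1,\dots,x_s]$ and $I'=I\cap S$, we have $I=I'R$ and $I'$ is $(x_1,\dots,x_s)S$-primary.

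The key step is to show that $\overline{I}$ is again generated by monomials supported on $\{x_1,\dots,x_s\}$, i.e.\ that $\overline{I}=\overline{I'}R$. For this I would use the polyhedral description of the integral closure of a monomial ideal: $x^a\in\overline{I}$ if and only if $a$ lies in $\con(E)+\RR_{\ge 0}^n$, where $E\subset\ZZ^n$ is the set of exponent vectors of the generators of $I$. Since every vector in $E$ has its last $n-s$ coordinates equal to zero, this region is a product of the corresponding $s$-dimensional Newton polyhedron with $\RR_{\ge 0}^{n-s}$; consequently membership $x^a\in\overline{I}$ depends only on the first $s$ coordinates of $a$, and the minimal generators of $\overline{I}$ can be chosen to involve only $x_1,\dots,x_s$. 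A direct check from the definition, using $u^k\in I^k$, gives the same conclusion without invoking polyhedra.

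With this structural fact in hand the conclusion follows. Since $I\subseteq\overline{I}\subseteq\sqrt{I}$ we get $\sqrt{\overline{I}}=\frak{p}$, so $\frak{p}$ is the unique minimal prime of $\overline{I}$ and hence $\frak{p}\in\Ass(\overline{I})$. To rule out embedded primes, recall that every associated prime of a monomial ideal is of the form $(\overline{I}:u)$ for some monomial $u=x^b$. Because membership in $\overline{I}$ depends only on the first $s$ coordinates of an exponent vector, the colon $(\overline{I}:x^b)$ is again generated by monomials in $x_1,\dots,x_s$; being a proper ideal, it is therefore contained in $(x_1,\dots,x_s)=\frak{p}$. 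Any associated prime $\frak{q}$ of $\overline{I}$ must contain the unique minimal prime $\frak{p}$, so $\frak{p}\subseteq\frak{q}\subseteq\frak{p}$ and thus $\frak{q}=\frak{p}$. This yields $\Ass(\overline{I})=\{\frak{p}\}$.

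The main obstacle is the middle step: verifying that passing to the integral closure does not introduce the variables $x_{s+1},\dots,x_n$. Everything else is formal once one knows $\overline{I}=\overline{I'}R$ with $\overline{I'}\subseteq S$, since then the problem reduces to the fact that the extension of the $(x_1,\dots,x_s)S$-primary ideal $\overline{I'}$ along the flat inclusion $S\hookrightarrow R$ remains primary, $R/\frak{p}$ being a domain. The colon computation in the final step is then routine.
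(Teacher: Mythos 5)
Your proof is correct, but it follows a genuinely different route from the paper's. The paper argues directly by contradiction: if $\frak{q}=(\overline{I}:u)\in\Ass(\overline{I})$ with $\frak{q}\neq\frak{p}$, it picks $x_i\in\frak{q}\setminus\frak{p}$, notes $(x_iu)^k\in I^k$ for some $k$, and uses that $I^k$ is still $\frak{p}$-primary with $x_i\notin\frak{p}$ to conclude $u^k\in I^k$, hence $u\in\overline{I}$, contradicting the properness of $(\overline{I}:u)$. You instead first establish the structural fact that $\overline{I}$ is extended from the subring $S=K[x_1,\dots,x_s]$ on the variables of $\frak{p}$, reducing everything to the zero-dimensional case in $S$; this costs more up front (the Newton-polyhedron or $u^k\in I^k$ computation) but makes the final colon argument transparent, and it makes explicit the very fact the paper uses silently, namely that a variable outside $\frak{p}$ is a nonzerodivisor modulo $I^k$ --- which is precisely the statement that $I$, and hence $I^k$, is extended from $S$. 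One slip to repair: you assert that the radical condition $\sqrt{I}=\frak{p}$ alone forces the minimal generators of $I$ to lie in $K[x_1,\dots,x_s]$; that is false (the ideal $(x^2,xy)$ has radical $(x)$ but the generator $xy$ involves $y$). What you need, and do have from $\Ass(I)=\{\frak{p}\}$, is that $I$ is $\frak{p}$-primary: if $u=x_{s+1}^a v$ with $a>0$ were a minimal generator, then $v\notin I$ and $x_{s+1}\notin\sqrt{I}$ would contradict primaryness. With that justification corrected, your argument is complete.
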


\begin{proof}
Since $\sqrt{I}=\sqrt{\overline{I}}$, it follows that $\frak{p}\in\Ass(\overline{I})$. If $\frak{p}\neq\frak{q}\in\Ass(\overline{I})$, then $\frak{q}=(\overline{I}:u)$
for some monomial element $u$ of $R$. Let $x_i\in\frak{q}\setminus\frak{p}$. Then $x_iu\in\overline{I}$ and so $(x_iu)^k\in I^k$ for some $k\geq 1$. Since $I$ is a $\frak{p}-$primary ideal, it follows $u^t\in I^t$ for some positive integers $t$ and so $u\in\overline{I}$ and this is a contradiction. Therefore  $\Ass(\overline{I})=\{\frak{p}\}$, as required.
\end{proof}

\begin{Theorem}\label{T0}
Let $M_{n,t}$ and $I_{n-1;n}$ be as before. Then $\overline{M_{n,t}}=I_{n-1;n}^t$.
\end{Theorem}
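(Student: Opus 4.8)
The plan is to reduce the computation of $\overline{M_{n,t}}$ to the integral closure of a power of the Veronese type ideal $I_{n-1;n}$, and then to exploit normality. First I would observe that the generators of $I_{n-1;n}$ are precisely the squarefree monomials of degree $n-1$, namely $v_i = x_1\cdots x_{i-1}x_{i+1}\cdots x_n$ for $i=1,\dots,n$, and that each generator of $M_{n,t}$ factors as $x^{e_i} = v_i^t$. Thus $M_{n,t} = (v_1^t,\dots,v_n^t)$ is exactly the ideal $J$ attached to $I = I_{n-1;n}$ in the statement of Lemma \ref{L1} (with $k=n$ and $u_i = v_i$).

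Applying Lemma \ref{L1} with this choice of $I$ then gives $\overline{M_{n,t}} = \overline{I_{n-1;n}^t}$, which converts the problem into showing that $I_{n-1;n}^t$ is already integrally closed. For this I would invoke the facts recalled in the introduction: the ideal $I_{n-1;n}=I_{(n-1;1,\dots,1)}$ is of Veronese type, hence polymatroidal, and every polymatroidal ideal is normal by \cite[Theorem 3.4]{HRV}. Normality means that every power is integrally closed, so in particular $\overline{I_{n-1;n}^t} = I_{n-1;n}^t$. Chaining the two equalities yields $\overline{M_{n,t}} = I_{n-1;n}^t$, as required.

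The only step requiring genuine care is the bookkeeping in the first paragraph: one must correctly recognize $M_{n,t}$ as the ideal generated by the $t$-th \emph{powers of the generators} of $I_{n-1;n}$ in the precise sense of Lemma \ref{L1}, rather than confuse it with the $t$-th power $I_{n-1;n}^t$ of the ideal itself. Once this identification is made the argument is essentially immediate, since all the substance is carried by Lemma \ref{L1} together with the normality of polymatroidal ideals. I therefore do not expect any serious obstacle; the merit of the result lies precisely in isolating this short two-step route, in contrast to the more computational original argument in \cite{J}.
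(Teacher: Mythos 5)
Your proposal is correct and follows exactly the paper's own argument: identify $M_{n,t}$ as the ideal generated by the $t$-th powers of the generators of $I_{n-1;n}$, apply Lemma \ref{L1} to get $\overline{M_{n,t}}=\overline{I_{n-1;n}^t}$, and conclude by the normality of Veronese type (polymatroidal) ideals. No differences worth noting.
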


\begin{proof}
As before $M_{n,t}=(x^{e_1},\ldots, x^{e_n})$, where $x^{e_i}=x_1^t\ldots x_{i-1}^tx_{i+1}^t\ldots x_n^t$. Thus, by Lemma \ref{L1}, $\overline{M_{n,t}}=\overline{I^t_{n-1,n}}$. Since Veronese type ideals are normal, it follows that $\overline{M_{n,t}}=I_{n-1;n}^t$.
\end{proof}

If you raise the $x_i$ uniformly to some power, say $x_i$ is replaced by $x_i^{t}$ everywhere in the generated of square-free Veronese type ideal $I_{n-1; n}$, then the resulting ideal $M_{n,t}$ is the image of the flat map $R\rightarrow R$ with $x_i\mapsto x_i^t$ for all $i$. Thus in this case $M_{n, t}$ will be Cohen-Macaulay, if $I_{n-1; n}$ is so ( see \cite{HTT}).
By \cite[Theorem 4.2]{HH2} the square-free Veronese type ideal $I_{n-1; n}$ is Cohen-Macaulay and so $M_{n,t}$ is Cohen-Macaulay.

\begin{Corollary}\label{C0}
Let $M_{n,t}$ be as before. Then $M_{n,t}$ is Cohen-Macaulay and $\overline{M_{n,t}}$ has embedded primes.
\end{Corollary}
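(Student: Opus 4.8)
The Cohen--Macaulay assertion is already in hand from the discussion preceding the statement: the substitution $x_i\mapsto x_i^{t}$ exhibits $M_{n,t}$ as the extension of the squarefree Veronese ideal $I_{n-1;n}$ along a flat (indeed free) endomorphism of $R$, so since $R/I_{n-1;n}$ is Cohen--Macaulay by \cite[Theorem 4.2]{HH2}, the property passes to $R/M_{n,t}$ by \cite{HTT}. In particular $M_{n,t}$ is unmixed, and its (minimal $=$ associated) primes are exactly the height-two ideals $\frak{p}_{ij}=(x_i,x_j)$, $i\neq j$.

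For the second assertion I would work through the identification $\overline{M_{n,t}}=I_{n-1;n}^{t}$ of Theorem \ref{T0}, and exhibit an associated prime of $R/I_{n-1;n}^{t}$ of height three; since $I_{n-1;n}^{t}$ and $I_{n-1;n}$ share the same (height-two) minimal primes, any such prime is automatically embedded. The computational engine is a membership criterion: writing the generators of $I_{n-1;n}$ as $u_i=\prod_{j\neq i}x_j$, every generator of $I_{n-1;n}^{t}$ has the form $\prod_{k=1}^{n}x_k^{\,t-a_k}$ for a composition $a_1+\cdots+a_n=t$ with $a_k\ge 0$, because the $x_k$-degree of $\prod_i u_i^{a_i}$ equals $t-a_k$. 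Hence, for a monomial $x^{b}=\prod_k x_k^{b_k}$, membership reduces to divisibility by one such generator, i.e.
\[
x^{b}\in I_{n-1;n}^{t}\iff \sum_{k=1}^{n}\max(0,\,t-b_k)\le t .
\]

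Assume now $n\ge 3$ and $t\ge 2$, and take $u=x_1^{t-1}x_2^{t-1}x_3\,x_4^{t}\cdots x_n^{t}$ together with $\frak{p}=(x_1,x_2,x_3)$. The plan is to verify $(I_{n-1;n}^{t}:u)=\frak{p}$ directly from the criterion. For $u$ itself the left-hand sum equals $1+1+(t-1)=t+1>t$, so $u\notin I_{n-1;n}^{t}$; multiplying by $x_1$, $x_2$ or $x_3$ lowers one positive summand by one, bringing the total down to $t$, so $x_1u,x_2u,x_3u\in I_{n-1;n}^{t}$; while the exponents of $x_4,\dots,x_n$ are already $\ge t$, so multiplying $u$ by any monomial supported on $x_4,\dots,x_n$ leaves the total at $t+1$ and keeps the product outside $I_{n-1;n}^{t}$. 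Thus $(I_{n-1;n}^{t}:u)$ contains $x_1,x_2,x_3$ but no monomial supported on $x_4,\dots,x_n$ alone, whence $(I_{n-1;n}^{t}:u)=\frak{p}$ and $\frak{p}\in\Ass(R/\overline{M_{n,t}})$. As $\height\frak{p}=3$ exceeds the height of every minimal prime, $\frak{p}$ is embedded.

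The only genuinely delicate point is to force the colon ideal to equal $\frak{p}$ \emph{exactly} rather than merely to contain it: this is precisely what dictates saturating the coordinates $x_4,\dots,x_n$ (so that no generator outside $\frak{p}$ can enter $(I_{n-1;n}^{t}:u)$) and pinning the deficiency $\sum_k\max(0,t-b_k)$ to the exact value $t+1$. It is also here that the hypotheses $n\ge 3$ and $t\ge 2$ are needed; indeed for $t=1$ the integral closure is the radical ideal $I_{n-1;n}$, and for $n=2$ it is $\frak{m}$-primary, so in those degenerate cases no embedded prime exists and the statement is to be read for $n\ge 3$, $t\ge 2$.
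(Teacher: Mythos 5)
Your proof of the Cohen--Macaulay half coincides with the paper's: both pass through the flat substitution $x_i\mapsto x_i^t$, \cite[Theorem 4.2]{HH2} and \cite{HTT}. For the embedded-primes half you take a genuinely different route. The paper disposes of it in one line by combining Theorem \ref{T0} with \cite[Lemma 5.1]{HRV} and \cite[Theorem 3.4]{V1}, i.e.\ by identifying $I_{n-1;n}^t$ with the Veronese-type ideal $I_{(t(n-1);t,\dots,t)}$ and quoting Vladoiu's classification of which Veronese-type ideals are unmixed; you instead exhibit an explicit embedded prime. Your computation checks out: a generator of $I_{n-1;n}^t$ is $\prod_k x_k^{t-a_k}$ with $a_k\ge 0$ and $\sum_k a_k=t$, divisibility of $x^b$ by such a generator amounts to $a_k\ge\max(0,t-b_k)$ for all $k$, so the membership test $\sum_k\max(0,t-b_k)\le t$ is correct; for $u=x_1^{t-1}x_2^{t-1}x_3x_4^t\cdots x_n^t$ the deficiency is $t+1$, it drops to $t$ after multiplication by $x_1$, $x_2$ or $x_3$ (here $t\ge 2$ is used for the $x_3$ step), and it is unchanged by any monomial supported on $x_4,\dots,x_n$, whence $(I_{n-1;n}^t:u)=(x_1,x_2,x_3)$ is an associated prime of height $3$, necessarily embedded since $\Min(I_{n-1;n}^t)=\Min(I_{n-1;n})$ consists of the height-two primes $(x_i,x_j)$. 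What your route buys is a self-contained, citation-free argument with a concrete witness; what the paper's buys is brevity. Your closing caveat is also a genuine (and correct) sharpening that the paper glosses over: for $t=1$ the ideal $M_{n,1}=I_{n-1;n}$ is integrally closed and unmixed, and for $n=2$ the closure $(x_1,x_2)^t$ is $\frak{m}$-primary, so the corollary is only true as stated under the implicit hypotheses $n\ge 3$ and $t\ge 2$.
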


\begin{proof}
By the above arguments, since $I_{n-1;n}$ is Cohen-Macaulay by \cite[Theorem 4.2]{HH2}, it follows that $M_{n,t}$ is Cohen-Macaulay. By using \cite[Lemma 5.1]{HRV}, \cite[Theorem 3.4]{V1} and Theorem \ref{T0}, $\overline{M_{n,t}}=I_{n-1;n}^t$ has embedded primes, as required.
\end{proof}

\begin{Theorem}\label{T00}
Let $M_{n,t}$ be as before. Then $\astab(M_{n,t})=\dstab(M_{n,t})=n-1$.
\end{Theorem}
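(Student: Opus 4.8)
The plan is to reduce everything to the squarefree Veronese ideal $I_{n-1;n}$ and then carry out one explicit combinatorial computation. Recall from the discussion after Theorem \ref{T0} that $M_{n,t}$ is the image of $I_{n-1;n}$ under the flat ring map $\varphi\colon R\to R$, $x_i\mapsto x_i^t$; writing $A=K[x_1^t,\dots,x_n^t]$, the ring $R$ is free over $A$ and $M_{n,t}^s=(I_{n-1;n}^s)R$ for every $s$. First I would use this flatness to transport both invariants. Since the closed fibre $R/\frak{m}_AR=K[x_1,\dots,x_n]/(x_1^t,\dots,x_n^t)$ is Artinian (depth $0$), the depth formula for flat local homomorphisms gives $\depth R/M_{n,t}^s=\depth A/I_{n-1;n}^s$, and extension of monomial primes gives a correspondence between $\Ass(M_{n,t}^s)$ and $\Ass(I_{n-1;n}^s)$, on the same index sets, for every $s$. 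Hence $\astab(M_{n,t})=\astab(I_{n-1;n})$ and $\dstab(M_{n,t})=\dstab(I_{n-1;n})$, and it suffices to prove both equal $n-1$ for $I:=I_{n-1;n}$.

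The heart of the argument is to determine exactly when each monomial prime is associated to a power of $I$. Its generators are $g_i=\prod_{j\neq i}x_j$, so a monomial $x^b$ lies in $I^s$ if and only if $b$ can be covered by $s$ generators, i.e. if and only if $\sum_{l=1}^n\max(0,s-b_l)\leq s$. I would first prove the key claim $\frak{m}\in\Ass(I^s)\iff s\geq n-1$. Setting $c_l=\max(0,s-a_l)$, a monomial $w=x^a$ is a witness for $\frak{m}$ (that is, $w\notin I^s$ but $x_iw\in I^s$ for all $i$) precisely when $\sum_l c_l=s+1$ and $a_i\leq s-1$ for every $i$; the latter forces every $c_l\geq 1$, whence $s+1=\sum_l c_l\geq n$, i.e. $s\geq n-1$. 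Conversely $w=(x_1\cdots x_n)^{n-2}$ is an explicit witness at $s=n-1$, so the claim holds. In particular $\depth R/I^s=0\iff\frak{m}\in\Ass(I^s)\iff s\geq n-1$, and since the depth is positive for $s\leq n-2$ and zero thereafter, $\dstab(I)=n-1$.

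For $\astab$ I would handle the intermediate primes by localization. Setting $x_j=1$ for $j\notin S$ sends $I_{n-1;n}$ to the squarefree Veronese ideal $I_{k-1;k}$ in the variables $\{x_i:i\in S\}$, where $k=|S|$, so $P_S=(x_i:i\in S)\in\Ass(I^s)$ if and only if $\frak{m}_S\in\Ass(I_{k-1;k}^s)$, which by the key claim (applied in $k$ variables) happens exactly when $s\geq k-1$. Therefore $\Ass(I^s)=\{P_S:2\leq|S|\leq\min(n,s+1)\}$, a set that grows strictly with $s$ until $s+1=n$ and is constant afterwards; hence $\astab(I)=n-1$. Combining with the previous paragraph yields $\astab(M_{n,t})=\dstab(M_{n,t})=n-1$.

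I expect the main obstacle to be the simultaneous bookkeeping of all associated primes rather than any single estimate: the depth statement only needs the top prime $\frak{m}$, but $\astab$ requires knowing that no prime enters $\Ass(I^s)$ after step $n-1$, which is precisely what the localization reduction controls. A secondary point to nail down is the case $s>n-1$ of the key claim; the cleanest route is to invoke the persistence property of polymatroidal ideals, $\Ass(I^s)\subseteq\Ass(I^{s+1})$, although one can instead exhibit an explicit witness $x^a$ with $\sum_l(s-a_l)=s+1$ and $0\leq a_i\leq s-1$, which is feasible exactly when $s\geq n-1$.
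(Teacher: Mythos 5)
Your argument is correct, and its first step coincides exactly with the paper's proof: both pass through the flat map $x_i\mapsto x_i^t$ to identify $\Ass(M_{n,t}^k)$ and $\depth(R/M_{n,t}^k)$ with the corresponding data for $I_{n-1;n}^k$. Where you diverge is in what you do with the reduced problem. The paper simply cites \cite[Corollary 5.7]{HRV} for the fact that $\astab(I_{n-1;n})=\dstab(I_{n-1;n})=n-1$, whereas you reprove this from scratch: the membership criterion $x^b\in I^s\iff\sum_l\max(0,s-b_l)\le s$, the characterization of witnesses for $\frak m$ (forcing $\sum_l c_l=s+1$ with every $c_l\ge 1$, hence $s\ge n-1$), the explicit witness $(x_1\cdots x_n)^{n-2}$ at $s=n-1$ together with the feasibility count $\sum a_l=(n-1)s-1\le n(s-1)$ for larger $s$, and the localization $x_j\mapsto 1$ identifying the behaviour of the intermediate primes $P_S$ with that of $I_{k-1;k}$, $k=|S|$. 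I checked these computations and they are sound; in particular your conclusion $\Ass(I_{n-1;n}^s)=\{P_S:2\le|S|\le\min(n,s+1)\}$ agrees with the known description in \cite{HRV}. What your route buys is a self-contained and elementary proof that does not depend on the general machinery for polymatroidal ideals; what it costs is length, and you should be slightly more careful at two points: state explicitly that the depth formula for the flat extension is being applied after localizing at the graded maximal ideal (or in its graded form), and note that for $|S|=1$ the localized ideal $I_{0;1}$ is the unit ideal, which is why the singleton primes never occur.
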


\begin{proof}
Consider the flat map $R\rightarrow R$ with $x_i\mapsto x_i^t$ for all $i$. Therefore $I_{n-1;n}R=M_{n,t}$. Thus $\Ass(M_{n,t}^k)=\Ass(I^k_{n-1;n}R)$
and $\depth(R/{M_{n,t}^k})=\depth(R/{I^k_{n-1;n}R})$ for all positive integer $k$. Since by using \cite[Corollariy 5.7]{HRV} $\astab(I_{n-1;n})=\dstab(I_{n-1;n})=n-1$, it follows that $\astab(M_{n,t})=\dstab(M_{n,t})=n-1$, as required.
\end{proof}
\section{Hilbert series of a special monomial ideal}

We start this section by the following theorem.

\begin{Theorem}\label{T1}
The Hilbert function of $M_{n,t}$ is given by
\[H(i) = \sum\limits_{j = 0}^{n - 2} {{{( - 1)}^j}} \left( {\begin{array}{*{20}{c}}
n\\
j
\end{array}} \right)\left( {\begin{array}{*{20}{c}}
{i(n - 1) - ji - j + n - 1}\\
{n - 1}
\end{array}} \right).\]
\end{Theorem}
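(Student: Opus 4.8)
The plan is to identify $H(i)$ with the number $\mu(M_{n,t}^i)$ of minimal monomial generators of the $i$-th power (equivalently the $i$-th graded piece of the special fiber ring $\Rees(M_{n,t})/\frak{m}\Rees(M_{n,t})$) and then to evaluate this number by inclusion--exclusion. First I would write each generator as $x^{e_k}=(x_1\cdots x_n)^t/x_k^t$, so that a product of $i$ of them, recorded by the multiplicities $c_1,\dots,c_n$ with $c_l\ge 0$ and $\sum_{l=1}^n c_l=i$, is the monomial $\prod_{l=1}^n x_l^{t(i-c_l)}$. All of these products have the same degree $i(n-1)t$, hence none divides another, and two of them coincide exactly when the tuples $(c_l)$ coincide. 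Consequently the minimal generators of $M_{n,t}^i$ are in bijection with the tuples $(c_l)$, and, setting $a_l=i-c_l$, with the lattice points
\[
\bigl\{\, a\in\ZZ_{\ge 0}^n : \sum_{l=1}^n a_l=i(n-1)\ \text{and}\ a_l\le i\ \text{for all}\ l \,\bigr\}.
\]
This is precisely the generating set of the Veronese type ideal $I_{(i(n-1);i,\dots,i)}$, so that $H(i)$ equals its cardinality; the same count is reached through the flat map $x_l\mapsto x_l^t$, which gives $\mu(M_{n,t}^i)=\mu(I_{n-1;n}^i)$.

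Next I would count these lattice points by inclusion--exclusion on the upper bounds $a_l\le i$, exactly in the spirit of Katzman's formula for Veronese type ideals. Dropping all upper bounds leaves $\binom{i(n-1)+n-1}{n-1}$ solutions of $\sum_l a_l=i(n-1)$; forcing a fixed set of $j$ coordinates to satisfy $a_l\ge i+1$ and substituting $a_l\mapsto a_l-(i+1)$ there leaves $\binom{i(n-1)-j(i+1)+n-1}{n-1}$ solutions. Summing over the $\binom{n}{j}$ choices of violating coordinates with alternating signs yields
\[
H(i)=\sum_{j=0}^{n}(-1)^j\binom{n}{j}\binom{i(n-1)-j(i+1)+n-1}{n-1}.
\]
Since $i(n-1)-j(i+1)+n-1=i(n-1)-ji-j+n-1$, and since for $j=n-1$ the upper entry equals $(n-1)(i-(i+1)+1)=0<n-1$ while for $j=n$ it is negative, the last two summands vanish and the sum truncates at $j=n-2$, giving the stated formula.

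The inclusion--exclusion is routine; the step that deserves the most care is the first one, namely checking that the $\binom{i+n-1}{n-1}$ products of $i$ generators are pairwise distinct, so that counting generators reduces to counting the tuples $(c_l)$, and that the substitution $a_l=i-c_l$ identifies them with the bounded lattice points above. As a sanity check I would also note that the further substitution $b_l=i-a_l$ turns the constraints into $b_l\ge 0$ and $\sum_l b_l=i$, so the count collapses to $\binom{i+n-1}{n-1}$; this shows the special fiber ring is a polynomial ring in $n$ variables (hence $\ell(M_{n,t})=n$), and evaluating at $i=1,2$ gives $\mu(M_{n,t})=n$ and $\mu(M_{n,t}^2)=\binom{n+1}{2}=n\cdot n-\binom{n}{2}$, which is the Freiman equality to be used afterwards.
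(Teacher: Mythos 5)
Your argument is correct and follows essentially the same route as the paper: both reduce $H(i)$ to counting the lattice points $\{\alpha\in\ZZ_{\ge 0}^n:\ \sum_j\alpha_j=i(n-1),\ \alpha_j\le i\}$ and then apply inclusion--exclusion on the upper bounds. The only differences are that you derive this reduction directly from the generators $x^{e_k}=(x_1\cdots x_n)^t/x_k^t$ where the paper cites De Negri--Hibi's Lemma 2.1, and that you additionally justify the truncation of the sum at $j=n-2$.
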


\begin{proof}

By \cite[Lemma 2.1]{DH} $H(i)$ is the number of sequences $(\beta_1, \cdots , \beta_n)$ satisfying
\linebreak
 $\sum\limits_{j = 1}^n {{\beta _j} = i(n - 1)t} $, $0 \leq \beta_{j} \leq it $ and $ \beta_{j}=t \alpha_{j}$ such that $0 \leq \alpha_{j} \leq i$. So $H(i)$ is the number of sequences $(\alpha_1, \cdots , \alpha_n)$ satisfying $\sum\limits_{j = 1}^n {{\alpha _j} = i(n - 1)} $ and $0 \leq \alpha_{j} \leq i $. In fact $H(i)$ is the number of integer solution of the equation
\[\alpha_1 + \cdots + \alpha_n = i(n-1), ~~~~ 0\leq \alpha_j \leq i, ~~~~1\leq j \leq n. \]
For $\alpha_j \geq 0$, $1 \leq j \leq n$, the number of integer solution of the equation
$\alpha_1 + \cdots + \alpha_n = i(n-1)$ is
\[\left( {\begin{array}{*{20}{c}}
{i(n - 1) + n - 1}\\
{n - 1}
\end{array}} \right).\]
For $\alpha_1 > i$ and $\alpha_j \geq 0$, $2 \leq j \leq n$, the number of integer solution of the equation
$\alpha_1 + \cdots + \alpha_n = i(n-1)$ is the same as the number of integer solution of the equation $\gamma_1 + \cdots + \gamma_n = i(n-1)-i-1$, where $\gamma_1=\alpha_1-i-1$ and  $\gamma_j =\alpha_j $ for $2 \leq j \leq n$ and equal to
\[\left( {\begin{array}{*{20}{c}}
{i(n - 1) - i - 1 + n - 1}\\
{n - 1}
\end{array}} \right). \]
For $\alpha_1, \alpha_2 > i$ and $\alpha_j \geq 0$, $3 \leq j \leq n$, the number of integer solution of the equation
$\alpha_1 + \cdots + \alpha_n = i(n-1)$ is the same as the number of integer solution of the equation $\gamma_1 + \cdots + \gamma_n = i(n-1)-2i-2$, where $\gamma_1=\alpha_1-i-1$, $\gamma_2=\alpha_2-i-1$ and  $\gamma_j =\alpha_j $ for $3 \leq j \leq n$ and equal to
\[\left( {\begin{array}{*{20}{c}}
{i(n - 1) - 2i - 2 + n - 1}\\
{n - 1}
\end{array}} \right). \]
By similar argument the number of integer solution of the equation
$\alpha_1 + \cdots + \alpha_n = i(n-1)$ such that $\alpha_{j_1}, \alpha_{j_2}, \cdots, \alpha_{j_r} > i$ for some $\{ j_1, \cdots, j_r \} \subseteq \{ 1, \cdots, n \}$ and $\alpha_j \geq 0$  for $\{ 1, \cdots, n \} \setminus \{ j_1, \cdots, j_r \}$ is equal to
\[\left( {\begin{array}{*{20}{c}}
{i(n - 1) - ji - j + n - 1}\\
{n - 1}
\end{array}} \right). \]
Now, by rule of sum we have
\[H(i) = \sum\limits_{j = 0}^{n - 2} {{{( - 1)}^j}} \left( {\begin{array}{*{20}{c}}
n\\
j
\end{array}} \right)\left( {\begin{array}{*{20}{c}}
{i(n - 1) - ji - j + n - 1}\\
{n - 1}
\end{array}} \right).\] \end{proof}

In combinatorial it is known that  for positive integer $n$ and $d$, the coefficient of $x_{1}^{n_1} \ldots x_{d}^{n_d}$ in the expansion $(x_{1}+x_{2}+ \ldots+x_{d})^n$ is
\[\frac{n!}{n_{1}! n_{2}! n_{3}! \ldots n_{d}!}\]
where each $n_{i}$ is an integer with $0 \leq n_{i} \leq n$, for all $1 \leq i \leq d$, and $n_{1} + n_{2} +\ldots+n_{d}=n$. So the coefficient of $t^i$ in the expansion $(1+t+t^2+t^3+ \ldots+t^{d-1})^{n}$ is
\[\frac{n!}{n_{1}! n_{2}! n_{3}! \ldots n_{d}!}\]
where each $n_{i}$ is an integer with $0 \leq n_{i} \leq n$, for all $1 \leq i \leq d$, $n_{1} + n_{2} +\ldots+n_{d}=n$ and $n_{2} +2n_{3}+3n_{4}+\ldots+(d-1)n_{d}=i$. Katzman \cite{K} denoted this coefficient by $A_{i}^{n,d}$. Actually
\[(1+t+t^2+t^3+ \ldots+t^{d-1})^{n}= \sum\limits_{i\geq 0}{A_{i}^{n,d}}t^{i}. \]

Katzman\cite{K} proved the following lemmas in general \cite[Theorem 2.5, Lemma 2.7 and Lemma 2.10]{K}.

\begin{Lemma}\label{l2.5}
\[(1-t)^{n-r}\sum\limits_{i\geq 0}{{i(n-j-1)+n-r-1 \choose n-r-1} t^{i}}=\sum\limits_{l\geq 0} A_{l(n-j-1)}^{n-r,n-j-1}t^{l}.\]
\end{Lemma}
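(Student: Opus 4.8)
The plan is to compare the coefficient of $t^{l}$ on the two sides. To lighten the notation, write $m=n-r$ and $s=n-j-1$, so that the asserted identity reads
\[(1-t)^{m}\sum_{i\geq 0}\binom{is+m-1}{m-1}t^{i}=\sum_{l\geq 0}A_{ls}^{m,s}t^{l}.\]
Recall that, by definition, $A_{ls}^{m,s}$ is the coefficient of $t^{ls}$ in $(1+t+\cdots+t^{s-1})^{m}$.

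First I would rewrite the right-hand side using $1+t+\cdots+t^{s-1}=(1-t^{s})/(1-t)$, so that $(1+t+\cdots+t^{s-1})^{m}=(1-t^{s})^{m}(1-t)^{-m}$. Expanding $(1-t^{s})^{m}=\sum_{p=0}^{m}(-1)^{p}\binom{m}{p}t^{ps}$ and using the standard expansion $(1-t)^{-m}=\sum_{k\geq 0}\binom{k+m-1}{m-1}t^{k}$, the coefficient of $t^{ls}$ comes from the terms with $k=ls-ps=(l-p)s$, giving
\[A_{ls}^{m,s}=\sum_{p\geq 0}(-1)^{p}\binom{m}{p}\binom{(l-p)s+m-1}{m-1},\]
where $\binom{(l-p)s+m-1}{m-1}$ is understood to vanish when $(l-p)s<0$ (that is, when $p>l$) and $\binom{m}{p}=0$ for $p>m$, so the sum is finite.

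Next I would expand the left-hand side in the same fashion. Writing $f(t)=\sum_{i\geq 0}\binom{is+m-1}{m-1}t^{i}$ and $(1-t)^{m}=\sum_{p=0}^{m}(-1)^{p}\binom{m}{p}t^{p}$, the coefficient of $t^{l}$ in the product $(1-t)^{m}f(t)$ is obtained by pairing $t^{p}$ with the coefficient of $t^{l-p}$ in $f(t)$, namely $\binom{(l-p)s+m-1}{m-1}$, and therefore equals
\[\sum_{p\geq 0}(-1)^{p}\binom{m}{p}\binom{(l-p)s+m-1}{m-1}.\]

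The two expressions are literally identical, so the coefficients of $t^{l}$ agree for every $l\geq 0$ and the identity follows. There is no substantial obstacle here: the entire content is the substitution $1+t+\cdots+t^{s-1}=(1-t^{s})/(1-t)$, which is exactly what converts the ``$t^{s}$-spacing'' on the right-hand side into the factor $(1-t)^{m}$ on the left. The only point requiring minor care is the convention that binomial coefficients with negative upper argument vanish, which ensures that both finite sums run over precisely the same nonzero range.
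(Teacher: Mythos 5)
Your proof is correct. The paper does not actually prove this lemma --- it is quoted from Katzman \cite[Theorem 2.5]{K} without proof --- so there is no in-paper argument to compare against; your coefficient-of-$t^l$ comparison, driven by the substitution $1+t+\cdots+t^{s-1}=(1-t^{s})/(1-t)$, is the standard and essentially the intended argument, and your remark that both truncations of the sum over $p$ coincide (because $s=n-j-1\geq 1$ in the relevant range $j\leq n-2$, so $l-p<0$ iff $(l-p)s<0$) correctly handles the only delicate point.
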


\begin{Lemma}\label{l2.7}
\begin{align*}
\sum\limits_{i= 0}^{\infty}{i(n-j-1)-j+n-1 \choose n-1}t^{i}&=\sum\limits_{r= 0}^{j}(-1)^{r}{j \choose r}\sum\limits_{i= 0}^{\infty}{i(n-j-1)+n-r-1 \choose n-r-1}t^i.\\
\end{align*}
\end{Lemma}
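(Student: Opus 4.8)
The plan is to establish this as an identity of formal power series in $t$ by matching coefficients. Since both sides are power series, it suffices to compare the coefficient of $t^i$ for each $i\ge 0$, which reduces the lemma to the purely binomial identity
\[
\binom{i(n-j-1)-j+n-1}{n-1}=\sum_{r=0}^{j}(-1)^r\binom{j}{r}\binom{i(n-j-1)+n-r-1}{n-r-1}.
\]
To declutter I would abbreviate $A=i(n-j-1)\ge 0$, so the target becomes
\[
\binom{A+n-1-j}{n-1}=\sum_{r=0}^{j}(-1)^r\binom{j}{r}\binom{A+n-r-1}{n-r-1}.
\]

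The key idea is to read every binomial coefficient as a stars-and-bars count, hence as a coefficient in a generalized binomial series. For $0\le r\le j\le n-1$ we have $n-r\ge 1$ and
\[
\binom{A+n-r-1}{n-r-1}=\binom{A+(n-r)-1}{(n-r)-1}=[x^A]\,(1-x)^{-(n-r)},
\]
while the left-hand binomial is
\[
\binom{A+n-1-j}{n-1}=[x^{A-j}]\,(1-x)^{-n}=[x^A]\,x^{j}(1-x)^{-n}.
\]
The coefficient extraction automatically yields $0$ in the degenerate range $A<j$, matching the convention $\binom{a}{b}=0$ for $0\le a<b$, so no case distinction is needed. Substituting the first expansion into the right-hand side and pulling the common factor $(1-x)^{-n}$ out of the coefficient operator gives
\[
\sum_{r=0}^{j}(-1)^r\binom{j}{r}[x^A]\,(1-x)^{-(n-r)}=[x^A]\,(1-x)^{-n}\sum_{r=0}^{j}(-1)^r\binom{j}{r}(1-x)^{r}.
\]

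The decisive step is to collapse the inner sum by the binomial theorem,
\[
\sum_{r=0}^{j}(-1)^r\binom{j}{r}(1-x)^{r}=\bigl(1-(1-x)\bigr)^{j}=x^{j}.
\]
Therefore the right-hand side equals $[x^A]\,x^{j}(1-x)^{-n}=[x^{A-j}]\,(1-x)^{-n}$, which is precisely the left-hand binomial computed above; summing back over $i$ recovers the claimed power-series identity. I do not expect a genuine obstacle here: once the coefficients are recognized as entries of $(1-x)^{-(n-r)}$, the alternating sum telescopes through $\sum_r(-1)^r\binom{j}{r}(1-x)^r=x^j$, which is the same inclusion–exclusion mechanism already used in the proof of Theorem \ref{T1}. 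The only points requiring a line of care are the index bookkeeping in the abbreviation $A=i(n-j-1)$ and the observation that $n-r\ge 1$ throughout, so that each negative-binomial expansion is legitimate.
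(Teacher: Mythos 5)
Your proof is correct. Note that the paper itself gives no argument for this lemma at all: it simply cites Katzman (\cite[Lemma 2.7]{K}) for it, so you are supplying a proof where the authors rely on a reference. Your route --- reducing to the coefficient identity $\binom{A+n-1-j}{n-1}=\sum_{r=0}^{j}(-1)^r\binom{j}{r}\binom{A+n-r-1}{n-r-1}$ with $A=i(n-j-1)$, reading each term as $[x^A](1-x)^{-(n-r)}$, and collapsing $\sum_r(-1)^r\binom{j}{r}(1-x)^r=x^j$ --- is clean and complete; the one hypothesis you correctly flag, $n-r\ge 1$, holds since $r\le j\le n-2$ in the range where the lemma is applied, and the coefficient-extraction convention handles the degenerate case $A<j$ (e.g.\ $i=0$, $j\ge 1$) consistently with $\binom{a}{b}=0$ for $0\le a<b$. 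The net effect is that your argument makes the paper's use of the lemma self-contained, and it is the same inclusion--exclusion mechanism already at work in the proof of Theorem \ref{T1}, just packaged as a generating-function computation.
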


\begin{Lemma}\label{l2.10}
\[\sum\limits_{k \geqslant 0}^{} {A_{k(n - j - 1)}^{n ,n - j - 1}}=(n-j-1)^{n-1}.\]
\end{Lemma}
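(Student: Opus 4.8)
The plan is to prove the identity by a direct combinatorial count, which avoids any further manipulation of generating functions. Writing $m = n-j-1$ for brevity, I would first unwind the definition of $A_i^{n,m}$. Since $(1+t+t^2+\cdots+t^{m-1})^n = \sum_{i\geq 0} A_i^{n,m}\, t^i$, expanding the product factor by factor shows that $A_i^{n,m}$ equals the number of sequences $(b_1,\ldots,b_n)$ with $0\leq b_r\leq m-1$ for each $r$ and $b_1+\cdots+b_n = i$: each of the $n$ factors contributes one exponent $b_r\in\{0,\ldots,m-1\}$, and the total exponent $i$ is the sum of these.

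With this interpretation the left-hand side $\sum_{k\geq 0} A_{km}^{n,m}$ becomes the number of \emph{all} sequences $(b_1,\ldots,b_n)\in\{0,1,\ldots,m-1\}^n$ whose coordinate sum is divisible by $m$. The key step is then a residue-class argument. I would choose the first $n-1$ coordinates $b_1,\ldots,b_{n-1}$ freely, giving $m^{n-1}$ possibilities, and observe that for each such choice there is exactly one admissible value of the last coordinate $b_n\in\{0,\ldots,m-1\}$ making the full sum a multiple of $m$, namely the unique element of $\{0,\ldots,m-1\}$ congruent to $-(b_1+\cdots+b_{n-1})$ modulo $m$. Summing over the free choices of the first $n-1$ coordinates then yields $\sum_{k\geq 0} A_{km}^{n,m} = m^{n-1} = (n-j-1)^{n-1}$, which is the asserted value.

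I do not anticipate a serious obstacle here; the only point requiring a moment's care is the observation that $\{0,1,\ldots,m-1\}$ is a complete residue system modulo $m$, so that the congruence condition on $b_n$ has a unique solution in the allowed range. An alternative route, closer in spirit to the generating-function machinery used throughout this section, is the roots-of-unity filter: putting $f(t)=(1+t+\cdots+t^{m-1})^n$ and letting $\omega=e^{2\pi i/m}$, one has $\sum_{k\geq 0} A_{km}^{n,m} = \frac{1}{m}\sum_{\ell=0}^{m-1} f(\omega^\ell)$. For $\ell\neq 0$ the geometric sum $1+\omega^\ell+\cdots+(\omega^\ell)^{m-1}$ vanishes, so that $f(\omega^\ell)=0$, while $f(1)=m^n$; hence the filter collapses to $\frac{1}{m}\cdot m^n = m^{n-1}$, recovering the same value and providing an independent check on the combinatorial count.
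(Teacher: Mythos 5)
Your proof is correct. Note that the paper itself offers no proof of this lemma: it simply quotes it from Katzman's article \cite{K}, so there is no in-paper argument to compare against. Your argument is a clean, self-contained justification. The identification of $A_i^{n,m}$ (with $m=n-j-1$) as the number of tuples $(b_1,\dots,b_n)\in\{0,\dots,m-1\}^n$ with $b_1+\cdots+b_n=i$ follows immediately from the paper's definition of $A_i^{n,d}$ as the coefficient of $t^i$ in $(1+t+\cdots+t^{d-1})^n$, and the residue-class count — fixing $b_1,\dots,b_{n-1}$ arbitrarily and noting that exactly one $b_n\in\{0,\dots,m-1\}$ makes the total sum divisible by $m$ — correctly yields $m^{n-1}$. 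The only implicit hypothesis is $m\geq 1$, which holds since $j$ ranges over $0,\dots,n-2$ wherever the lemma is invoked. Your alternative roots-of-unity filter is also valid ($f(\omega^\ell)=0$ for $\ell\neq 0$ because each factor $1+\omega^\ell+\cdots+\omega^{\ell(m-1)}$ vanishes, and $f(1)=m^n$) and serves as a useful independent check; either route would be an acceptable replacement for the bare citation.
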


Now, we are ready to prove the following result.
\begin{Theorem}
The Hilbert polynomial of $M_{n,t}$  is
\[Q(n) = \sum\limits_{j = 0}^{n - 2} {{( - 1)}^j} {n \choose j}  \sum\limits_{r = 0}^j {{( - 1)}^r}{j \choose r} {(1 - t)^r}\sum\limits_{k \geqslant 0}^{} {A_{k(n - j - 1)}^{n - 1,n - j - 1}{t^k}}. \]
\end{Theorem}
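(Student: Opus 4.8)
The plan is to read $Q(n)$ as the generating function $Q(n)=\sum_{i\ge 0}H(i)\,t^{i}$ attached to the Hilbert function of Theorem \ref{T1}, and to transport it into Katzman's coefficients by a double application of Lemmas \ref{l2.7} and \ref{l2.5}. First I would substitute the closed form of Theorem \ref{T1} and interchange the finite outer sum over $j$ with the summation over $i$, obtaining
\[Q(n)=\sum_{j=0}^{n-2}(-1)^{j}\binom{n}{j}\sum_{i\ge 0}\binom{i(n-1)-ji-j+n-1}{n-1}t^{i}.\]
Since $i(n-1)-ji=i(n-j-1)$, the inner series is exactly $\sum_{i\ge 0}\binom{i(n-j-1)-j+n-1}{n-1}t^{i}$, which is precisely the left-hand side of Lemma \ref{l2.7} for the chosen $j$.

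Next I would apply Lemma \ref{l2.7} to each inner series, replacing it by $\sum_{r=0}^{j}(-1)^{r}\binom{j}{r}\sum_{i\ge 0}\binom{i(n-j-1)+n-r-1}{n-r-1}t^{i}$, which introduces the second alternating sum over $r$. Then, for each fixed pair $(j,r)$, I would invoke Lemma \ref{l2.5}, whose left-hand binomial matches the one just produced, to rewrite the remaining series as $\sum_{i\ge 0}\binom{i(n-j-1)+n-r-1}{n-r-1}t^{i}=(1-t)^{-(n-r)}\sum_{k\ge 0}A_{k(n-j-1)}^{n-r,n-j-1}t^{k}$. Substituting this back and collecting the two finite outer summations over $j$ and $r$ together with the finitely supported series in $k$ produces the asserted closed form for $Q(n)$.

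The step I expect to be the main obstacle is the index bookkeeping around Lemma \ref{l2.5}. One must check that the exponent collapse $n-1-j=n-j-1$ makes the inner binomial fit the hypothesis of Lemma \ref{l2.7}, and, more delicately, that the lemma's parameter may be taken equal to the current summation index $r$, so that every term genuinely carries the factor $(1-t)^{-(n-r)}$ and the upper index $n-r$ in $A_{k(n-j-1)}^{n-r,n-j-1}$; keeping this denominator and this upper index correctly aligned is the crux of the manipulation. Because the sums over $j$ and over $r$ are finite and each $A_{k(n-j-1)}^{n-r,n-j-1}$ vanishes for large $k$, all the interchanges of summation and the termwise geometric summations are legitimate, so no convergence question arises. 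As a final consistency check I would specialize to small $n$: for $n=2,3$ the bijection $\gamma_{j}=i-\alpha_{j}$ shows $H(i)=\binom{i+n-1}{n-1}$, so the whole expression should reduce to $1/(1-t)^{n}$, which tests precisely that the denominators $(1-t)^{n-r}$ have been tracked correctly, and Lemma \ref{l2.10} can be used to confirm the corresponding multiplicity-type value.
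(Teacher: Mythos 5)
Your route is the same as the paper's --- interchange the sums over $i$ and $j$, apply Lemma \ref{l2.7} to introduce the alternating sum over $r$, then apply Lemma \ref{l2.5} to each inner series --- but your starting point is off by a factor of $(1-t)^{n}$, and this propagates to an endpoint that is not the asserted formula. The quantity the theorem calls $Q(n)$ is not the generating function $\sum_{i\ge 0}H(i)t^{i}$; it is the numerator of the Hilbert series, i.e.\ $Q(n)=(1-t)^{n}\sum_{i\ge 0}H(i)t^{i}$. This is how the paper's proof opens, it is confirmed by the subsequent Corollary (which writes the Hilbert series as $Q(n)$ divided by $(1-t)^{n}$), and it is confirmed by your own proposed sanity check: for $n=2,3$ the displayed formula evaluates to $1$, not to $1/(1-t)^{n}$, so had you actually run that check it would have exposed the mismatch. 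Because you drop the prefactor, after invoking Lemma \ref{l2.5} you are forced to write $(1-t)^{-(n-r)}$ where the theorem has $(1-t)^{r}$; these differ by exactly the missing $(1-t)^{n}$, so what your manipulation actually derives is the Corollary's Hilbert series $P(n,t)$ rather than $Q(n)$. The repair is mechanical and is precisely what the paper does: carry $(1-t)^{n}$ through the double sum, split it as $(1-t)^{r}(1-t)^{n-r}$ for each fixed pair $(j,r)$, and let the factor $(1-t)^{n-r}$ be absorbed by Lemma \ref{l2.5}, leaving the positive power $(1-t)^{r}$ in the final expression.

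One point in your favour: the upper index you obtain, $A_{k(n-j-1)}^{n-r,\,n-j-1}$, is what the argument genuinely produces. The superscript $n-1$ printed in the theorem statement (and again in the Corollary) appears to be a typographical slip, since the paper's own chain of equalities also terminates with $n-r$; so that particular piece of your ``index bookkeeping'' is correct and should not be altered to match the statement.
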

\begin{proof}
By definition of Hilbert polynomial we have
\[Q(n) = {(1 - t)^n}\sum\limits_{i = 0}^\infty  {\sum\limits_{j = 0}^{n - 2} {{{( - 1)}^j}{n \choose j}} } {i(n - 1) - ji - j + n - 1 \choose n-1}{t^i}.\]
Therefore we have
\begin{align*}
&{(1 - t)^n}\sum\limits_{i = 0}^\infty  {\sum\limits_{j = 0}^{n - 2} {{{( - 1)}^j}{n \choose j} } {i(n - 1) - ji - j + n - 1 \choose n-1}{t^i}}\\
&=\sum\limits_{j = 0}^{n - 2}{(1 - t)^n}  { {{{( - 1)}^j}{n \choose j} } \sum\limits_{i = 0}^\infty{i(n - 1) - ji - j + n - 1 \choose n-1}{t^i}}\\
&=\sum\limits_{j = 0}^{n - 2}(1-t)^{n} (-1)^j {n \choose j}  \sum\limits_{r = 0}^j {{( - 1)}^r}{j \choose r}\sum\limits_{i = 0}^\infty   {i(n-j - 1) + n -r- 1 \choose n-r-1}{t^i}\\
&=\sum\limits_{j = 0}^{n - 2}(-1)^j {n \choose j}\sum\limits_{r = 0}^j {{( - 1)}^r}{j \choose r}(1-t)^{r}(1-t)^{n-r}\sum\limits_{i = 0}^\infty   {i(n-j - 1) + n -r- 1 \choose n-r-1}{t^i}\\
&=\sum\limits_{j = 0}^{n - 2}(-1)^j {n \choose j}\sum\limits_{r = 0}^j {{( - 1)}^r}{j \choose r}(1-t)^{r}\sum\limits_{k \geqslant 0}^{} {A_{k(n - j - 1)}^{n - r,n - j - 1}{t^k}}.
\end{align*}
The second equality follows from Lemma \ref{l2.7} and the last equality follows from Lemma \ref{l2.5}.
\end{proof}

\begin{Corollary}
The Hilbert series of $M_{n,t}$ is
\[ P(n,t)=\frac{\sum\limits_{j = 0}^{n - 2} {{( - 1)}^j} {n \choose j}  \sum\limits_{r = 0}^j {{( - 1)}^r}{j \choose r} {(1 - t)^r}\sum\limits_{k \geqslant 0} {A_{k(n - j - 1)}^{n - 1,n - j - 1}{t^k}}}{(1-t)^{n}}.\]
\end{Corollary}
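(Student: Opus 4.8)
The plan is to obtain the Hilbert series directly from the previous theorem, since that theorem has already absorbed all of the combinatorial work. Recall that the Hilbert series of $M_{n,t}$ is by definition the generating function of its Hilbert function, that is, $P(n,t)=\sum_{i\geq 0}H(i)t^{i}$, where $H(i)$ is the quantity computed in Theorem \ref{T1}. First I would point out that the preceding (unnumbered) theorem is really an evaluation of $(1-t)^{n}\sum_{i\geq 0}H(i)t^{i}$ rather than of anything that deserves to be called a polynomial: its proof opens with $Q(n)=(1-t)^{n}\sum_{i=0}^{\infty}H(i)t^{i}$ and then rewrites the right-hand side step by step, using Lemma \ref{l2.7} and Lemma \ref{l2.5}, until it reaches the displayed closed form for $Q(n)$.

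Consequently the entire content of the corollary is the single algebraic step of dividing that identity by $(1-t)^{n}$. Explicitly, from the relation $Q(n)=(1-t)^{n}P(n,t)$ established in the previous theorem one gets $P(n,t)=Q(n)/(1-t)^{n}$, and substituting the closed form furnished there for $Q(n)$ produces exactly the stated expression for $P(n,t)$. No further manipulation of the binomial coefficients or of the numbers $A^{n-1,n-j-1}_{k(n-j-1)}$ is needed, because all of those have already been collected inside $Q(n)$; the corollary's numerator is verbatim the $Q(n)$ of the preceding theorem.

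I do not anticipate any genuine obstacle. The only point worth a word of justification is that the formal power series $\sum_{i\geq 0}H(i)t^{i}$ is indeed a rational function that we may write with denominator $(1-t)^{n}$; this is guaranteed because $M_{n,t}$ sits inside a polynomial ring of Krull dimension $n$, so its Hilbert series has the form $(\text{numerator})/(1-t)^{n}$ with the numerator a polynomial in $t$, and the preceding theorem has identified that numerator as $Q(n)$. Granting this, the displayed formula for $P(n,t)$ follows at once by the division just described, so the proof is immediate and the substance of the result lies entirely in Theorem \ref{T1} and the theorem preceding this corollary.
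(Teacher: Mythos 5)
Your proof is correct and matches the paper's (implicit) argument exactly: the paper states this corollary without proof, precisely because it is obtained by dividing the identity $Q(n)=(1-t)^{n}\sum_{i\geq 0}H(i)t^{i}$ from the preceding theorem by $(1-t)^{n}$ and substituting the closed form for $Q(n)$. (One caveat that lies in the paper rather than in your argument: the superscript $n-1$ on $A_{k(n-j-1)}^{n-1,\,n-j-1}$ in both the theorem's statement and this corollary should apparently be $n-r$, as in the final line of the theorem's proof.)
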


\begin{Corollary}
The multiplicity of $M_{n,t}$ is
\[e(M_{n,t})=\sum\limits_{j= 0}^{n-2}{(-1)^j {n \choose j} (n-j-1)^{n-1}}.\]

\end{Corollary}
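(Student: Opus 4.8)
The plan is to read the multiplicity off the Hilbert series via its numerator at $t=1$. Recall the standard fact that if a finitely generated standard graded algebra has Hilbert series written in lowest terms as $h(t)/(1-t)^{d}$ with $h(1)\neq 0$, then its Krull dimension is $d$ and its multiplicity is $e=h(1)$. The graded object here is the fiber cone of $M_{n,t}$ (whose Hilbert function is exactly the $H(i)$ of Theorem~\ref{T1}), so its dimension is the analytic spread $\ell(M_{n,t})$. Since the $n$ generators $x^{e_i}$ have exponent vectors forming the matrix $t(J-\mathrm{Id}_n)$, with $J$ the all--ones matrix and $\det\bigl(t(J-\mathrm{Id}_n)\bigr)=t^{n}(-1)^{n-1}(n-1)\neq 0$ for $n\ge 2$, these vectors have full rank $n$, whence $\ell(M_{n,t})=n$. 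Thus the denominator $(1-t)^{n}$ is already reduced and $e(M_{n,t})=N(1)$, where $N(t)$ is the numerator of $P(n,t)$, namely the polynomial
\[ N(t)=\sum_{j=0}^{n-2}(-1)^{j}\binom{n}{j}\sum_{r=0}^{j}(-1)^{r}\binom{j}{r}(1-t)^{r}\sum_{k\geq 0}A_{k(n-j-1)}^{\,n-r,\,n-j-1}\,t^{k} \]
produced in the computation of the Hilbert polynomial.

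The key step is to evaluate $N(t)$ at $t=1$. The variable $t$ enters decisively through the factor $(1-t)^{r}$, which vanishes for every $r\geq 1$; hence all inner summands with $r\geq 1$ die and only the term $r=0$ survives. At $r=0$ the superscript of $A$ is $n-r=n$ and $t^{k}$ evaluates to $1$, so
\[ e(M_{n,t})=N(1)=\sum_{j=0}^{n-2}(-1)^{j}\binom{n}{j}\sum_{k\geq 0}A_{k(n-j-1)}^{\,n,\,n-j-1}. \]
Applying Lemma~\ref{l2.10}, which gives $\sum_{k\geq 0}A_{k(n-j-1)}^{\,n,\,n-j-1}=(n-j-1)^{n-1}$ for each $j$, and substituting term by term yields exactly
\[ e(M_{n,t})=\sum_{j=0}^{n-2}(-1)^{j}\binom{n}{j}(n-j-1)^{n-1}, \]
which is the asserted formula.

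The point needing genuine care is the justification that $N(1)\neq 0$, so that $(1-t)^{n}$ is truly the reduced denominator and the identity $e=N(1)$ is legitimate; this is also the step where one must be careful to use the numerator in its $n-r$ form (from the Hilbert polynomial computation), so that the surviving $r=0$ term matches the hypothesis of Lemma~\ref{l2.10}. Nonvanishing can be checked directly and independently: the full alternating sum $\sum_{j=0}^{n}(-1)^{j}\binom{n}{j}(n-1-j)^{n-1}$ is the $n$-th finite difference of a polynomial of degree $n-1$ in $j$, hence equals $0$; the omitted term $j=n-1$ contributes $0^{n-1}=0$ and the term $j=n$ contributes $(-1)^{n}(-1)^{n-1}=-1$, so the truncated sum equals $1$. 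Therefore $N(1)=1\neq 0$, which simultaneously confirms $\ell(M_{n,t})=n$ and the validity of the multiplicity formula (and incidentally shows $e(M_{n,t})=1$, consistent with the fiber cone being a polynomial ring on $n$ algebraically independent generators).
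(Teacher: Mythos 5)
Your proof is correct and follows exactly the route the paper intends but leaves unwritten: set $t=1$ in the numerator of the Hilbert series, note that the factor $(1-t)^{r}$ kills every term with $r\geq 1$ so only $r=0$ survives, and apply Lemma~\ref{l2.10}. Your two side remarks are both correct and worth recording: the superscript must be taken in its $n-r$ form from the derivation (the displayed theorem and corollary misprint it as $n-1$, which would instead yield $(n-j-1)^{n-2}$), and the finite-difference computation showing $N(1)=1\neq 0$ is what legitimizes reading the multiplicity off the numerator, consistent with the fiber cone being a polynomial ring in $n$ variables.
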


If $(S, \frk{m})$ is a Noetherian
local ring and $J\subseteq I$ be a pair of ideals of $S$, then $J$ is a reduction of $I$ if
and only if $\bar{J}=\bar{I}$. In particular, $\ell(I)=\ell(J)$ ( see \cite[Theorem 3.2]{BA}).

Now, by Theorem \ref{T0} we have $\bar{M}_{n,t}$ is equal to the Veronese type ideal $I_{(t(n-1); t,\ldots,t)}$. Therefore $\ell(M_{n,t})=n$.

\begin{Theorem}
$M_{n,t}$ is a Freiman ideal for all $n$ and $t$.
\end{Theorem}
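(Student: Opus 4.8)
The plan is to verify the defining equality of a Freiman ideal directly, by computing the three quantities $\mu(M_{n,t})$, $\ell(M_{n,t})$, and $\mu(M_{n,t}^2)$ and checking that
$\mu(M_{n,t}^2)=\ell(M_{n,t})\mu(M_{n,t})-{\ell(M_{n,t})\choose 2}$. The first two are already in hand: the $n$ generators $x^{e_1},\dots,x^{e_n}$ are pairwise distinct monomials, so $\mu(M_{n,t})=n$, and the computation preceding the statement gives $\ell(M_{n,t})=n$. Hence the right-hand side equals $n\cdot n-{n\choose 2}={n+1\choose 2}$, and the whole problem reduces to showing $\mu(M_{n,t}^2)={n+1\choose 2}$.

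To compute $\mu(M_{n,t}^2)$ I would use that $M_{n,t}$ is equigenerated in degree $(n-1)t$. Consequently every generator of $M_{n,t}^2$ is a product $x^{e_i}x^{e_j}=x^{e_i+e_j}$ with $1\le i\le j\le n$, and all of these have the same degree $2(n-1)t$. Since monomials of equal degree cannot properly divide one another, the minimal monomial generators of $M_{n,t}^2$ are exactly the \emph{distinct} monomials among the $x^{e_i+e_j}$. Thus $\mu(M_{n,t}^2)$ equals the number of distinct products, and it remains to count them.

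The heart of the argument, and the only step requiring care, is to show that the ${n+1\choose 2}$ products $x^{e_i+e_j}$ (for $i\le j$) are pairwise distinct, which I would do by inspecting their exponent vectors. Recall that $x^{e_i}$ has exponent $t$ in every variable except $x_i$, where it is $0$. For $i\ne j$ the product $x^{e_i+e_j}$ therefore has exponent $t$ in the two variables $x_i,x_j$ and $2t$ in all others, so it is determined by the unordered pair $\{i,j\}$; this yields ${n\choose 2}$ distinct monomials. For $i=j$ the product $x^{2e_i}$ has exponent $0$ in $x_i$ and $2t$ elsewhere, so it is determined by $i$; this yields $n$ distinct monomials. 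Finally, since $t\ge 1$, the diagonal products (each having a variable of exponent $0$) cannot coincide with the off-diagonal ones (whose smallest exponent is $t>0$), so the two families are disjoint. Hence the products are pairwise distinct and $\mu(M_{n,t}^2)={n\choose 2}+n={n+1\choose 2}$.

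Combining the three computations yields
$\mu(M_{n,t}^2)={n+1\choose 2}=n\cdot n-{n\choose 2}=\ell(M_{n,t})\mu(M_{n,t})-{\ell(M_{n,t})\choose 2}$,
which is exactly the Freiman condition; as $n$ and $t$ were arbitrary, $M_{n,t}$ is Freiman for all $n$ and $t$. The only potential obstacle is the distinctness count, but because the exponent patterns of the diagonal and off-diagonal products are so rigidly constrained, no subtlety arises beyond invoking $t>0$ to separate the two families.
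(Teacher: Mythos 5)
Your proof is correct and follows essentially the same route as the paper: both reduce the Freiman condition to showing $\mu(M_{n,t}^2)=\binom{n+1}{2}$ using $\mu(M_{n,t})=\ell(M_{n,t})=n$, and both obtain this by a direct combinatorial count of the degree-$2(n-1)t$ generators of the square. Your enumeration of the distinct exponent vectors $e_i+e_j$ (split into the $n$ diagonal and $\binom{n}{2}$ off-diagonal cases) is just a slightly more explicit phrasing of the paper's count of integer solutions of $\alpha_1+\cdots+\alpha_n=2(n-1)$ with $0\le\alpha_j\le 2$.
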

\begin{proof}
Set $I=M_{n,t}$. Then $I$ is a Freiman ideal if and only if
\[\mu(I^2)=\ell(I) \mu(I)-{\ell(I) \choose 2}.\]
$\mu(I^2)$ is the number of sequence $(\beta_{1}, \beta_{2}, \ldots, \beta_{n})$ satisfies $\beta_{1}+\beta_{2}+\ldots+\beta_{n}=2t(n-1)$ such that $\beta_{i}=0, t$ or $2t$.
So we can say, $\mu(I^2)$  is the number of sequence $(\alpha_{1}, \alpha_{2}, \ldots, \alpha_{n})$ satisfies $\alpha_{1}+\alpha_{2}+\ldots+\alpha_{n}=2(n-1)$ and $0\leq \alpha_{i}\leq 2$.

The number of integer solution of equation $\alpha_{1}+\alpha_{2}+\ldots+\alpha_{n}=2(n-1)$ such that $\alpha_{1}=0$ is one.
Therefore the number of integer solution of equation $\alpha_{1}+\alpha_{2}+\ldots+\alpha_{n}=2(n-1)$ such that $\alpha_{1}=0$ or $\alpha_{2}=0$ or $\ldots$ or $\alpha_{n}=0$ is $n={n \choose 1}$.\\
Also the number of integer solution of equation $\alpha_{1}+\alpha_{2}+\ldots+\alpha_{n}=2(n-1)$ such that $\alpha_{1}=1$ is $n-1$.
Therefore the number of integer solution of equation $\alpha_{1}+\alpha_{2}+\ldots+\alpha_{n}=2(n-1)$ such that $\alpha_{1}=1$ or $\alpha_{2}=1$ or $\ldots$ or $\alpha_{n}=1$ is
\[{n-1 \choose 1}+{n-2 \choose 1}+ \ldots+{1 \choose 1}.\]
So $\mu(I^2)={n \choose 1}+{n-1 \choose 1}+ \ldots+{1 \choose 1}={n+1 \choose 2}$.
Hence $\mu(I^2)=n \mu(I)-{n \choose 2}$ and $M_{n,t}$ is Freiman ideal for any $n$ and $t$.
\end{proof}

\end{document}